\newtheorem{theorem}{Theorem}[section]
\newtheorem{lemma}[theorem]{Lemma}
\DeclareMathOperator{\Hom}{Hom}
\newcommand{\GG}{\mathbb{G}}
\newcommand{\RR}{\mathbb{R}}
\newcommand{\ZZ}{\mathbb{Z}}
\newcommand{\Proj}{\mathrm{Proj}}
\newcommand{\git}{\mathbin{/\mkern-6mu/}}
\title{The CompGIT package: a computational tool for Geometric Invariant Theory quotients}
\author{Robert Hanson\thanks{rh24058@essex.ac.uk}}
\author{Jesus Martinez-Garcia\thanks{jesus.martinez-garcia@essex.ac.uk}}
\affil{University of Essex}
\date{June 2025}
\begin{document}

\maketitle
\begin{abstract}
    We describe {\verb|CompGIT|}, a SageMath package to describe Geometric Invariant Theory (GIT) quotients of projective space by simple groups. The implementation is based on algorithms described by Gallardo--Martinez-Garcia--Moon--Swinarski \cite{GMGMS}. In principle the package is sufficient to describe any GIT quotient of a projective variety by a simple group --- in practice it requires that the user can construct an equivariant embedding of the polarised variety into projective space. The package describes the non-stable and unstable loci up to conjugation by the group, as well as describing the strictly polystable loci. We discuss potential applications of the outputs of {\verb|CompGIT|} to algebraic geometry problems, a well as suggesting directions for future developments. 
\end{abstract}

\section{Introduction}
Understanding quotients of projective varieties by group actions is a fundamental problem in mathematics that goes back to the 19th century German algebra and geometry school and Hilbert's 14th problem \cite{HilbertProblemList}. Such quotients are of interest in ring theory and algebraic geometry, but also find applications in fields as diverse as graph theory, coding theory, dynamical systems, and computer vision \cite{DerksenKemper}. Within algebraic geometry, this often comes in the form of a Geometric Invariant Theory (GIT) quotient --- an essential tool for constructing \textit{moduli spaces} that parametrise objects such as algebraic curves \cite{Deligne-Mumford}, K-polystable Fano varieties \cite{Liu-Xu, Gallardo-JMG-cubics, Gallardo-JMG-Spotti, JMG-Papazachariou-Zhao, 8authors} and K3 surfaces \cite{sexticsguy, Gallardo-JMG-Zhang, Laza}.

\verb|CompGIT| is a Sagemath package \cite{sagemath} which aims to significantly simplify the description of GIT quotients of the form $\mathbb P^n\git G$ where $G$ is a simple group. In principle --- but unfortunately not always in practice --- such quotients can recover any $G$-structured GIT problem. The code in \verb|CompGIT| is based on algorithms and code that first appeared in \cite{GMGMS}. 

\subsection{GIT quotients} 
\label{sec:GIT quotients}
We recall the fundamentals of GIT and their role in our code. Let $k$ be an algebraically closed field of characteristic $0$. If a projective variety $X=\Proj(R)$ is acted upon by a linear algebraic group $G$, we want to consider the quotient $X/G$. For this quotient to be well behaved in general, it is necessary to assume that $G$ is reductive \cite{counter-example}. In Hilbert's approach we consider a $G$-linearised very ample line bundle $L$ so that $R=\bigoplus_m H^0(X, L^{\otimes m})$ and consider the invariant subring
$$R^G=\bigoplus_m H^0(X, L^{\otimes m})^G\subseteq R.$$

If $R^G$ is finitely generated (which is the case when $G$ is reductive), then the inclusion $R^G\subseteq R$ induces a rational map $\pi\colon X \dashrightarrow X\git_{L}G \eqqcolon \Proj(R^G)$. The central characteristic of GIT is that $X\git_LG$ is projective variety, called the \emph{GIT quotient} of the $G$-space $X$ \cite{Mumford}. 

A geometric description of the quotient is determined by the following stability conditions (see precise definitions in \cite{GMGMS}, cf. \cite{Mumford}). The \emph{semistable locus} $X^{ss}\subset X$ is the open subset where $\pi$ is defined. The \emph{unstable locus} is the complementary closed subset $X^{us}=X\setminus X^{ss}$. The \emph{stable locus} $X^s\subseteq X^{ss}$ is the subset of all semistable closed orbits with finite stabilisers. The \emph{polystable locus} $X^{ps}$ is the subset of all orbits that are closed in the semistable locus. Thus we have the natural inclusions $X^s\subseteq X^{ps}\subseteq X^{ss}$. Then, by construction, the polystable orbits are in one-to-one correspondence with points in $X\git_LG$, which is why, in geometric applications, we are primarily interested in finding stable orbits and strictly polystable orbits. However, finding GIT semistable orbits has some interest too, e.g. when working modulo S-equivalence, or when describing the semistable substack of some moduli stack \cite{AHLH}.  

As $L$ is very ample it determines an embedding $X\subseteq \mathbb P^N$ for $N=h^0(X, L)-1$, so that $L=O_X(1)$. Being $G$-linearised implies that we have an embedding $G<\mathrm{Aut}(\mathbb P^N)$ as a subgroup, so that the action of $G$ on $\mathbb P^N$ restricts to the action of $G$ on $X$. Thus, we can define GIT (semi/poly)stability for $G$-orbits of $\mathbb P^N$ and, in fact, a point $p\in X\subset \mathbb P^N$ is $G$-(semi/poly)stable as point of $X$ if and only if it is $G$-(semi/poly)stable as a point of $\mathbb P^N$. Thus we have the identifications
\begin{align*}
    X^{ss}&=X\cap \left(\mathbb P^{N}\right)^{ss}, & X^{ps}&=X\cap \left(\mathbb P^{N}\right)^{ps}, & X^{s}&=X\cap \left(\mathbb P^{N}\right)^{s}.
\end{align*}
To solve a GIT problem, one may then simply replace $X$ by $\mathbb P^N$, determine the (semi/poly)stable orbits and then restrict to $X$. This is the approach that the computations of \verb|CompGIT| enables. In practice, given arbitrary $X$, it may be difficult to solve the required $G$-equivariant embedding problem, although many applications do come with natural embeddings, for instance when $X\subseteq\mathbb P^k$ is a hypersurface. Having a solution for $\mathbb P^N$ also works well to describe \'etale covers of $\mathbb P^N$ branched at hypersurfaces, since the moduli of hypersurfaces can be interpreted as the moduli of \'etale covers of $\mathbb P^N$ branched at a hypersurface. 

Further considerations need to be made with regards to the group $G$. The algorithms presented in \cite{GMGMS}, which \verb|CompGIT| implements, are optimised for a simple group $G$, making use of the rich action of the Weyl group, which simplifies both outputs and computations. In \cite{GMGMS}, the authors explain how a simplification of the algorithms can be obtained for general reductive group $G$ (e.g. torus). Since often in applications $G$ is simple, we make that assumption in the implementation of \verb|CompGIT|. 

In view of the above, we assume that $X=\mathbb P^N=\mathbb P(V)$, where $V$ is a vector space and $G$ is a simple group acting linearly on $X$. In other words, $V$ is a $G$-representation. The package \verb|CompGIT| computes:
\begin{itemize}
    \item  The unstable locus $X^{us}=X\setminus X^{ss}$.
    \item The \emph{non-stable locus} $X^{ns}=X\setminus X^{s}$
    \item The \emph{strictly polystable locus} $X^{sps}=X^{ps}\setminus X^{s}$ with respect to a maximal torus $T$ of $G$.
\end{itemize}

The details of the algorithms in \verb|CompGIT| can be found in \cite{GMGMS} and we omit them here. This document serves as a user manual for the package, with an emphasis on concrete examples of computations and suggestions for future improvements, in the hope that they attract interest from the community. Further examples and outputs are also available on the \verb|GitHub| homepage of our package \cite{CompGIT}. 

One way to explore \verb|CompGIT| is to go through the examples below and the ones online and refer back to the text below for additional explanations. While the code in the examples is mostly self-explanatory, some basic familiarity with \verb|SageMath| is helpful, as it is to have some understanding of the geometry behind the examples and GIT more generally.

\subsection{Related implementations} 
There have been other attempts at describing the unstable loci computationally, most notably by A'Campo and Popov \cite[Appendix C]{DerksenKemper} using the Computer Algebra Systems (CASs) \verb|pari-gp| and \verb|LiE|. Mathematically, the A'Campo-Popov implementation focuses on the affine cone (the \emph{nullcone}, in their language) $\mathcal N_{G, V}$ over the unstable locus $(\mathbb P^N)^{us}=(\mathbb P(V))^{ss}$ where $V$ is the underlying affine vector space. The nullcone can be given a stratification according to `how unstable' each orbit is and it plays an important role in the theory of invariants \cite{Collingwood-McGovern}. We refer to \cite[Appendix C]{DerksenKemper} for the details, where  algorithms to determine the stratification of the Nullcone $\mathcal N_{G, V}$ are presented for any reductive group $G$, while their implementation in  \verb|pari-gp| and \verb|LiE| assumes $G$ is semisimple. To the best of our understanding, their algorithms do not consider the non-stable/semistable nor the strictly polystable loci and may take longer, since they need to distinguish within the different orbits of $(\mathbb P^N)^{ss}$. In practice, the most important difference between their work and ours is probably that \verb|Sagemath| is currently a more accessible CAS. It would be desirable if portability between  \verb|pari-gp|, \verb|LiE| and \verb|Sagemath| was explored, so that their output could be compared to ours.

\subsection{Organisation of the paper}
We give a quick summary of what follows. In \S\ref{sec:installation} we give a quick summary of the installation procedure for \verb|CompGIT|. Section \ref{sec:simple-groups} gives a quick summary of conventions and facts from simple groups needed to understand the use of \verb|Sagemath| code. In \S\ref{sec:example-and-applications}, we consider a very simple well-known example of GIT quotient (moduli of plane cubic curves) to illustrate how \verb|CompGIT| solves a GIT problem. Then, we provide a list of examples of geometric interest that \verb|CompGIT| can attack, some of which have never been considered in the literature, discussing their feasibility. Finally, in \S\ref{sec:future-developments}, we discuss possible generalisations and optimisations, in the hope that other researchers consider improving \verb|CompGIT|. At this point is worth mentioning that \verb|CompGIT| has a GNU General Public License $3.0$, a free, copyleft license. Thus, the developers welcome improvements and changes to the software and algorithms by the community.

\subsection{Acknowledgements}
Geometric Invariant Theory is a tool developed by D. Mumford in the 1960s to solve Hilbert's problem without the need of finding $G$-invariants of $R$, or generators and relations for $R^G$. Although  the exposition above omits it, determining which points rely in $X^{ss}$, $X^s$ and $X^{ps}$ relies on Mumford's numerical criterion (often known as Hilbert-Mumford's numerical criterion), which our algorithms use extensively.

\verb|CompGIT|'s implementation is based on the algorithms (and original code) in \cite{GMGMS} by Gallardo, Martinez-Garcia, Moon and Swinarski, which in turn systematises and generalises ideas in \cite{Gallardo-JMG-seminal}. We are indebted to the co-authors in \cite{GMGMS} for letting us use and modify a previous implementation of their code. We thank Fr\'ed\'eric Chapoton for many valuable comments to the code and Alastair Litterick to help us clarify some of the representation theory in this manuscript. The implementation presented here is fully compatible with \verb|Sagemath 9.x| (essentially for \verb|Python 3.x|) and expands the code in \cite{GMGMS} to include the exceptional simple groups. 

The first author was supported by EPSRC's project EP/V055399/1 and the University of Essex. The second author was partially supported by EPSRC'S project EP/V055399/1.

\section{Installation}
\label{sec:installation}
\verb|CompGIT| is a \verb|SageMath| package written in \verb|Python 3.0|. The \verb|SageMath| computer algebra system is a free open-source mathematics software system designed for applications in many areas of mathematics, including algebra, geometry and combinatorics. \verb|SageMath| builds on top of many existing packages, such as \verb|NumPy|, \verb|SciPy|, \verb|matplotlib|, \verb|Maxima|, \verb|FLINT| and \verb|R|. The \verb|SageMath| installation guide \cite{SageMath_install} contains binaries and source code --- note that building from source code has the advantage of various computer-specific optimisations.  Our {\verb|GitHub|} homepage \cite{CompGIT} contains detailed installation instructions via pip. After successful installation each session in {\verb|CompGIT|} starts with 
\begin{verbatim}
sage: from CompGIT import *
\end{verbatim}
and if using \verb|CompGIT| within a sage file, simply include
\begin{verbatim}
from CompGIT import *
\end{verbatim}
at the top of the file.

All sample code in this article assumes first the execution of this command. We also note that the prefix \textit{sage:} is a prompt provided by the {\verb|SageMath|} console that allows one to distinguish between the inputs and the outputs.

\section{Simple groups}
\label{sec:simple-groups}

A reductive linear algebraic group $G$ is called \textit{simple} when every smooth connected normal subgroup is trivial or equal to $G$. Such groups are fundamental building blocks in group theory, as demonstrated by decomposition results such as the Jordan--Hölder theorem. 

\subsection{Root systems}
\label{sec:root-systems}
The fundamental properties of semisimple groups are encoded by a set of combinatorial data known as a \textit{root system}, which are \emph{irreducible} for simple groups. To define them, we start with a choice of maximal torus $T \subset G$, a lattice of \textit{one-parameter subgroups} $N = \Hom(\GG_m, T)$, a set of \textit{characters} $M = \Hom(T, \GG_m)$ and a perfect pairing $\langle \cdot \, , \cdot \rangle : M \times N \to \ZZ$. Let $U$ be a $G$-representation. As a $T$-module, $U$ decomposes as a direct sum of eigenspaces $U_\chi=\{u\in U \, \colon \, t\cdot u = \chi(t)u\}$, where $\chi\colon T \rightarrow \mathbb G_m$ is a character. If $U_\chi\neq 0$, then  $\chi$ is called a \emph{weight}, and $u\in U_\chi$ is a \emph{weight vector}. In the special case where $U=\mathrm{Lie}(G)$, the non-zero weights $\chi$ are called the \emph{roots} of $G$. We then denote the finite set of roots by $\Phi$.  Figure \ref{fig:roots_G2} displays the root system of $G_2$. The symmetries of the root system are encapsulated by the \emph{Weyl group} $W=W(G)$, whose action admits generating regions $F \subset N$ called fundamental chambers. Note that we can read the Weyl group of $G_2$ in Figure 3.1 by noticing that we can identify the 'longer' roots with vertices of a hexagon and the 'shorter' roots with its sides (or viceversa). Thus, the Weyl group $W(G_2)=D_{6}$, the dyhedral group of order $12$.

\begin{figure}[!ht]
\label{fig:roots_G2}
\centering
\begin{tikzpicture}[scale=0.9]
\foreach\ang in {60,120,...,360}{
\draw[->] (0,0) -- (\ang:2cm);
}
\foreach\ang in {30,90,...,330}{
\draw[->, thick] (0,0) -- (\ang:3cm);
}
\node[anchor=south west,scale=0.6] at (2,0) {$\alpha$};
\node[anchor=north,scale=0.6] at (-3,2) {$\beta$};
\end{tikzpicture}
\caption{The root system of the exceptional group $G_2$. Our choice of simple roots (as per Section \ref{se:conventions} and Table \ref{tab:simple-roots}) are labeled by $\alpha$ and $\beta$.} 
\end{figure}
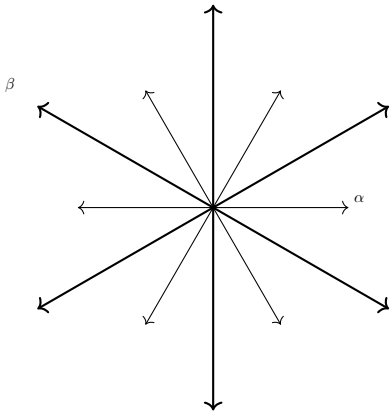

Classical works of Cartan \cite{cartan}, Chevalley \cite{chevalley} and Killing \cite{killing} show that reductive simple groups over an algebraically closed field are classified by the \textit{Dynkin type} of their root system, which takes values in one of $A_n$, $B_n$, $C_n$, $D_n$, $E_6$, $E_7$, $E_8$, $F_4$, $G_2$ where $n \geq 2$ is an integer. Weyl \cite{weyl_classical} named the Dynkin types $A_n$, $B_n$, $C_n$ and $D_n$ \textit{classical}, whereas $E_6$, $E_7$, $E_8$, $F_4$, $G_2$ are called \textit{exceptional.}

In order to understand the input and output of the programme, it is helpful to recall some additional representation theory of simple groups. Let $G$ be a simple group and $V$ a $G$-representation. Up to conjugation, $G$ has a unique Borel subgroup $B\cong T\ltimes U$ where $T$ is a maximal torus and $U$ is the unipotent radical. When $G$ is embedded into $GL_n$, we can assume it is written in matrix form and, up to conjugation, we can write $B$ as upper triangular matrices, with $T$ given as diagonal elements and $U$ given as strict upper triangular matrices. The Borel subgroup $B$ leaves some one-dimensional subspaces of $V$ invariant. If $v\in V$ is a non-zero vector of such an invariant one-dimensional subspace, then we can write $g\cdot v = t_gv$ for some $t_g\in \mathbb G_m$. This defines a group homomorphism $t\colon B \rightarrow \mathbb G_m$. The \emph{weight} $w$ of an invariant vector $v$ is the character $t_v\colon T \rightarrow \mathbb G_m$ obtained by restricting $t$ to $T$. Note we can view $w\in \mathbb Z^n$ where $n=\dim (T)$ is the rank of $G$. The subgroup $B$ acts on the eigenspaces $V_\chi$, sending them to other eigenspaces. It can be show that if $V$ is irreducible there is a unique eigenspace $V_w$ corresponding to a unique weight $w$ such that $B\cdot V_w\subseteq V_w$. The weight $w$ is called the \emph{highest} weight. Moreover, if a weight $w$ is the highest weight of an irreducible $G$-representation $V$, then $V$ is uniquely determined by $w$ up to isomorphism.

Thus, when considering the action of $G$ on $X=\mathbb P^N=P(V)$, we may assume that $V$ is a $G$-reducible representation and it is determined by a (highest) weight $w\in \mathbb Z^n$ where $n$ is the rank of $G$ and it is sufficient to give the highest weight $w$ of $V$ to determine $V$. 

\verb|Sagemath|'s standard library is already able to deal with  $G$-representations when $G$ is a simple group and \verb|CompGIT| builds on it. For instance, to introduce a $\mathrm{Spin}(8)$-representation of highest weight $\omega_3$, we would write:
\begin{verbatim}
Phi = WeylCharacterRing("C4")
representation= Phi(Phi.fundamental_weights()[3])
\end{verbatim}
In order to create a $\mathrm{Spin}(4)$-representation of highest weight $(2,0)$, one may write:
\begin{verbatim}
Sp4 = WeylCharacterRing("C2")
representation3 = Sp4(2,0);
\end{verbatim}

It may be worth to keep in mind a more geometric example. Consider $G=\mathrm{SL}_{n+1}$ acting naturally on $\mathbb P^n$. Let $V=H^0(\mathbb P^n, \mathcal O_{\mathbb P^n}(d))$ where $d\geqslant 1$, i.e. $V$ represents homogeneous polynomials of degree $d$, i.e. hypersurfaces of degree $d$ in $\mathbb P^n$. Note that the $G$-action on $\mathbb P^n$ induces a $G$-representation on $V$ and, in fact, its maximal weight is $w=(d, 0, \ldots, 0)\in \mathbb Z^{n+1}$. Let $X=\mathbb P(V)$. The GIT problem $X\git \mathrm{SL}_{n+1}$ describes equivalent hypersurfaces of degree $d$ up to $\mathrm{PGL}_{n+1}$-isomorphism.

\subsection{Implementation in CompGIT} The combinatorial characterisation of simple groups provided by root systems is both pervasive throughout group theory \cite{bourbaki} and essential to computations within {\verb|CompGIT|}. We represent various properties of the root system as attributes within the class {\verb|SimpleGroup|}. Our code utilises inputs from the {\verb|Weyl Groups|} standard package included in \verb|SageMath| --- the canonical method of representing root system data within {\verb|SageMath|}. Below are some elementary computations with the {\verb|SimpleGroup|} class applied to the root system $A_2$. 

\begin{verbatim}
sage: G=SimpleGroup("A", 2)
sage: G.group_type()
'A'
sage: G.rnk()
2        
sage: G.fundamental_chamber_generators() 
[2 1]
[1 2]
sage: G.pairing(1, [1,2])
(-1, 2)
\end{verbatim}


\subsection{Root system conventions} 
\label{se:conventions}
We specify various coordinate systems that we use to represent root system data. We closely follow the terminology outlined by Fulton and Harris \cite{fulton-harris}. The \textit{H-coordinates} on the lattice $N$ of one-parameter subgroups are defined by basis elements given by the set $\{H_i\}_{i = 1, ..., n+1}$ of $(n+1) \times (n+1)$ matrices with only one non-zero (i, i)th element of unitary size. The \textit{L-coordinates} on the set $M$ of characters are dual to the H-coordinates. For type $A$ root systems we introduce an auxiliary coordinate system given by $\{ T_i := H_i - H_{i+1} \}_{i = 1, ..., n}$, where the reduction from $n+1$ to $n$ basis coordinates is to account for the fact that type $A$ weights of a one parameter subgroup add to $0$. 

We implement the symmetries of the Weyl group with respect to certain choices that we now describe. We begin by selecting a set of \textit{simple roots} $\Delta \subset \Phi$ which are roots that cannot be written as a sum of two positive roots. To our choice of $\Delta$ the corresponding fundamental chamber $F_{\Delta}$ is then given by vectors $v \in \RR^n$ that satisfy the positivity condition $\langle \alpha , v \rangle \, \geq 0$ for all $\alpha \in \Delta$. We then define \textit{gamma coordinates} that are defined by a basis set of \textit{gamma rays} --- a class of vectors whose positive span coincides with $F_{\Delta}$. To specify our choice of $\Delta$, consider the standard Euclidean basis $e_1, ... e_n$ for $\RR^n$ and refer to the choices of simple roots provided by Table \ref{tab:simple-roots}. Our simple roots for $G_2$ are also depicted in Figure \ref{fig:roots_G2}. 

\bgroup
\def\arraystretch{1.4}

\begin{table}[!ht]
\begin{center}
\begin{tabular}{c|c}
\textbf{Dynkin type}
&
\textbf{Simple roots}
\\ \hline
$A_n$ & $e_i - e_{i-1}$ for $i = 1, ... n$
\\
$B_n$ & $e_i - e_{i-1}$ for $i = 1, ... n - 1$ and $e_n$
\\ 
$C_n$ & $e_i - e_{i-1}$ for $i = 1, ... n - 1$ and $2e_n$
\\
$D_n$ & $e_i - e_{i-1}$ for $i = 1, ... n - 1$ and $e_{n-1} + e_n$
\\
$F_4$ 
& 
$\begin{bmatrix} 0  \\  0  \\ 0  \\  1/2 \end{bmatrix}, 
\begin{bmatrix}  1  \\  0  \\ 0  \\ -1/2 \end{bmatrix}, 
\begin{bmatrix} -1  \\  1  \\ 0  \\ -1/2 \end{bmatrix}, 
\begin{bmatrix}  0  \\ -1  \\ 1  \\ -1/2 \end{bmatrix}$
\\
$E_6$ 
& 
$
\begin{bmatrix} 1 \\ 0 \\ 0 \\ 0 \\ 0 \\ -1/2 \end{bmatrix},
\begin{bmatrix} -1 \\ 1 \\ 0 \\ 0 \\ 0 \\ -1/2 \end{bmatrix},
\begin{bmatrix} 0 \\ -1 \\ 1 \\ 0 \\ 0 \\ -1/2 \end{bmatrix},
\begin{bmatrix} 0 \\ 0 \\ -1 \\ 1 \\ 0 \\ -1/2 \end{bmatrix},
\begin{bmatrix} 0 \\ 0 \\ -1 \\ 1 \\ 0 \\ -1/2 \end{bmatrix},
\begin{bmatrix} 0 \\ 0 \\ 0 \\ 0 \\ 0 \\ \sqrt{3}/2 \end{bmatrix}
$ 
\\
$E_7$
&  
$
\begin{bmatrix} 1 \\ 0 \\ 0 \\ 0 \\ 0 \\ 0 \\ -1/2 \end{bmatrix},
\begin{bmatrix} -1 \\ 1 \\ 0 \\ 0 \\ 0 \\ 0 \\ -1/2 \end{bmatrix},
\begin{bmatrix} 0 \\ -1 \\ 1 \\ 0 \\ 0 \\ 0 \\ -1/2 \end{bmatrix},
\begin{bmatrix} 0 \\ 0 \\ -1 \\ 1 \\ 0 \\ 0 \\ -1/2 \end{bmatrix},
\begin{bmatrix} 0 \\ 0 \\ 0 \\ -1 \\ 1 \\ 0 \\ -1/2 \end{bmatrix},
\begin{bmatrix} 0 \\ 0 \\ 0 \\ 0 \\ -1 \\ 1 \\ -1/2 \end{bmatrix},
\begin{bmatrix} -1/2 \\ -1/2 \\ -1/2 \\ -1/2 \\ -1/2 \\ -1/2 \\ \sqrt{2}/2 \end{bmatrix}
$
\\
$E_8$ 
&  
$
\begin{bmatrix} 1 \\ 0 \\ 0 \\ 0 \\ 0 \\ 0 \\ -1/2 \\ 0 \end{bmatrix},
\begin{bmatrix} -1 \\ 1 \\ 0 \\ 0 \\ 0 \\ 0 \\ -1/2 \\ 0 \end{bmatrix},
\begin{bmatrix} 0 \\ -1 \\ 1 \\ 0 \\ 0 \\ 0 \\ -1/2 \\ 0 \end{bmatrix},
\begin{bmatrix} 0 \\ 0 \\ -1 \\ 1 \\ 0 \\ 0 \\ -1/2 \\ 0 \end{bmatrix},
\begin{bmatrix} 0 \\ 0 \\ 0 \\ -1 \\ 1 \\ 0 \\ -1/2 \\ 0 \end{bmatrix},
\begin{bmatrix} 0 \\ 0 \\ 0 \\ 0 \\ -1 \\ 1 \\ -1/2 \\ 0 \end{bmatrix},
\begin{bmatrix} 0 \\ 0 \\ 0 \\ 0 \\ 0 \\ 1 \\ -1/2 \\ -1 \end{bmatrix},
\begin{bmatrix} 0 \\ 0 \\ 0 \\ 0 \\ 0 \\ 0 \\ -1/2 \\ 0 \end{bmatrix}
$
\\
$G_2$ 
& $\begin{bmatrix} 1 \\ -3/2 \end{bmatrix}, \begin{bmatrix} 0 \\ \sqrt{3}/2 \end{bmatrix}$
\end{tabular}
\end{center}
\caption{Choice of simple roots for simple groups}
\label{tab:simple-roots}
\end{table}
\egroup

\section{Finding the unstable and non-stable loci}
\label{sec:example-and-applications}
Having explained the conventions we use to represent group data, we can now demonstrate how \verb|CompGIT| solves GIT problems of geometric interest. We first run a toy example: the moduli space of plane cubics in $\mathbb P^2$ (\S\ref{sec:comp-example}), to familiarise the reader with \verb|CompGIT| and demonstrate how one reads and interprets the outputs (\S\ref{sec:comp-interpret}). We then discuss more complex applications (\S\ref{sec:applications}) that either recover problems covered in the literature or provide new problems of geometric interest. 

\subsection{Moduli of plane cubics}
\label{sec:comp-example}
A plane cubic curve in $\mathbb P^2$ is in one-to-one correspondence with cubic forms in $3$ variables (prior to any identification via isomorphisms). Thus, their parameter space can be identified with the vector space $V=H^0(\mathbb P^2, \mathcal O_{\mathbb P^2}(3))$. An element $f\in V$ can be seen as as a homogeneous polynomial
$$f=\sum a_Ix^I$$
where the sum is over all $I=(d_0, d_1, d_2)\in \mathbb Z_{\geqslant 0}^3$ such that $d_0+d_1+d_2=3$ and such that $x_I=x_0^{d_0}x_1^{d_1}x_2^{d_2}$ and $a_I\in \mathbb C$. The natural action of $G=\mathrm{SL}_3(\mathbb C)$ on $\mathbb P^2$ (by acting on the coordinates $(x_0, x_1, x_2)$ induces an action on $V$ (where $g\in \mathrm{SL}_3(\mathbb C)$ sends $f(x_0, x_1, x_2)$ to $f(g\cdot (x_0, x_1, x_2))$). Thus $V$ is a $G$-representation and we have that $\mathbb P(V)\git G\cong \mathbb P(V)\git \mathrm{Aut}(\mathbb P^2)= \mathbb P(V)\git \mathrm{PGL}_3$. From the exposition in \S\ref{sec:root-systems}, it follows that $x_0^3$ is a weight. In fact, under the right choice of coordinates for $B\cong T\ltimes U$ (e.g. by choosing $T$ to be the diagonal entries in $\mathrm{SL}_3$ and $U$ to be the lower-diagonal entries), $x_0^3$ is the highest weight. Recall that the highest weight uniquely determines the $G$-representation. After loading \verb|CompGIT| (see \S\ref{sec:installation}), we can create this representation by determining the type of the group \verb|A2| and the weight \verb|(3,0,0)|
\begin{verbatim}
sage: Phi = WeylCharacterRing("A2")
sage: representation= Phi(3,0,0)
\end{verbatim}
The next line is to initialise the problem:
\begin{verbatim}
sage: P = GITProblem(representation)
\end{verbatim}
The object \verb|P|, of class \verb|GITProblem|, has all the information required to `solve' the problem. It contains a lot of information that can be `requested' from the class via different class methods. It is possible that some of these can be used to obtain further information from the quotient $\mathbb P(V)\git G$ (e.g. information about its cohomology, `a la Kirwan' \cite{Kirwan}). The information inside \verb|P| includes a list of one-parameter subgroups that suffices, via the Hilbert-Mumford criterion \cite[Theorem 2.1]{Mumford}, to determine its stability.

We can now ask \verb|CompGIT| to find all non-stable elements. In this example, this is fast but in some cases this can take a long time (see \cite[Table 1]{GMGMS} for a discussion of times for a large number of examples).
\begin{verbatim}
sage: P.solve_non_stable()
\end{verbatim}
The next step is to print the output. Since it is already computed in the previous step, this is fast (although it may be lengthy, depending on the problem).

\begin{verbatim}
sage: P.print_solution_nonstable()


***************************************
SOLUTION TO GIT PROBLEM: NONSTABLE LOCI
***************************************
Group: A2
Representation  A2(3,0,0)
Set of maximal non-stable states:
(1) 1-PS = (1, 1, -2) yields a state with 7 characters
Maximal nonstable state={(1, 2, 0), (2, 1, 0), (1, 1, 1), (0, 2, 1),
(0, 3, 0), (2, 0, 1), (3, 0, 0)}
(2) 1-PS = (1, -1/2, -1/2) yields a state with 6 characters
Maximal nonstable state={(1, 2, 0), (1, 0, 2), (2, 1, 0), (1, 1, 1), (2, 0, 1),
(3, 0, 0)}
\end{verbatim}
The output above needs to be read as follows. There are 2 families (\verb|Maximal nonstable states|) that parameterise (up to $G$-conjugacy) all non-stable elements and the weights generating this family are listed. For each family there is a one-parameter subgroup $\lambda$ such that if $f\in V$ belongs to non-stable family with weights in $\Xi_{V,\lambda\geqslant 0}$, then the Hilbert-Mumford weight $\mu_(f,\lambda)\geqslant 0$ and any $\widetilde f$ with $\mu_(\widetilde f,\lambda)\geqslant 0$ must be generated by weights in $\Xi_{V,\lambda\geqslant 0}$. The output assumes that the maximal torus to which all weights belong is in diagonal form in $\mathrm{SL}_3$, so the first one-parameter subgroup must be read as $\lambda_1(t)\coloneqq\mathrm{Diag}(t^1,t^1, t^{-2})\in \mathrm{SL}_3(\mathbb C)$. Since the set of all weights corresponds to monomials of degree $3$, a non-stable $f$ in this family can be read as:
$$f=a_0x_0x_1^2+a_1x_0^2x_1+a_2x_0x_1x_2+a_3x_1^2x_2+a_4x_1^3+a_5x_0^2x_2+a_6x_0^3.$$
Sometimes the one-parameter subgroups appear as fractional vectors, such as in the second family. One can read it like an actual one-parameter subgroup by clearing denominators, i.e. $\lambda_2\coloneqq\mathrm{Diag}(t^2,t^{-1}, t^{-1})$, since positive rescaling does not affect the Hilbert-Mumford criterion.

Just as above, we can obtain the non-stable loci and the strictly $T$-polystable loci where $T$ is a maximal one-dimensional torus (recall that there is precisely one such torus up to conjugacy):
\begin{verbatim}
sage: P.solve_unstable()
sage: P.print_solution_unstable()


**************************************
SOLUTION TO GIT PROBLEM: UNSTABLE LOCI
**************************************
Group: A2 RepresentationA2(3,0,0)
Set of maximal unstable states:
(1) 1-PS = (1, 1/4, -5/4) yields a state with 5 characters
Maximal unstable state={(1, 2, 0), (2, 1, 0), (0, 3, 0), (2, 0, 1), (3, 0, 0)}

sage: P.solve_strictly_polystable()
sage: P.print_solution_strictly_polystable()


*************************************************************
SOLUTION TO GIT PROBLEM: STRICTLY POLYSTABLE LOCI
*************************************************************
Group: A2 RepresentationA2(3,0,0)
Set of strictly polystable states:
(1) A state with 1 characters
Strictly polystable state={(1, 1, 1)}
(2) A state with 3 characters
Strictly polystable state={(0, 2, 1), (2, 0, 1), (1, 1, 1)}
\end{verbatim}

\subsection{Interpretation of the outputs} 
\label{sec:comp-interpret}
The code above exemplifies how \verb|CompGIT| can provide, in terms of representation theory, a full description of the non-stable, unstable and strictly $T$-polystable families in a GIT problem. However, in applications we usually require an answer in terms that are more meaningful to the problem at hand. In the case above, for instance, we would like a classification in terms of singularities of the curves involved.

Let $p=(0:0:1)\in \mathbb P^2$. From the output in the previous section and by grouping elements, we see that nonstable elements destabilised by $\lambda_1$ can be written as
$$f=x_2\left(f_2(x_0, x_1)+f_3(x_0, x_1)\right),$$
where $f_2$ is a quadric form and $f_3$ is a cubic form linear form and any element written in that form is destabilised by $\lambda_1$. By taking partial derivatives, it follows that $f$ is of this form, if and only if its vanishing locus $V(f)$ is singular at $p$.

Similarly, nonstable elements characterised by $\lambda_2$ are precisely those that can be written as
$$f=x_0\left(f_2(x_0, x_1)+x_2l(x_0, x_1, x_2)\right),$$
where $f_2$ is a quadric form and $l$ is a linear form and any element written in that form is destabilised by $\lambda_2$. One readily checks that $f$ is of this form, if and only if its vanishing locus $V(f)$ is reducible and two of the components contain $p$.

It follows from \cite{GMGMS} that our code precisely characterises all the nonstable elements up to $G$-action. Thus we have shown that a cubic form is nonstable if and only if it is singular (as reducible curves in $\mathbb P^2$ are always singular). By taking complements, we conclude:
\begin{lemma}
    A cubic form $f$ in $3$ variables is GIT-stable if and only if $V(f)$ is  smooth.
\end{lemma}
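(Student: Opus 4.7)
The plan is to interpret each of the two maximal nonstable families produced by \texttt{CompGIT} as a geometric condition on $V(f)$ and then invoke $G$-equivariance for the converse. By the correctness of the algorithm (cf.\ \cite{GMGMS}), every nonstable cubic is $G$-equivalent to one whose monomial support is contained in one of the two maximal states listed above, so showing that each family consists of singular cubics proves one implication.

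I would first analyse the family associated with $\lambda_1 = \mathrm{Diag}(t, t, t^{-2})$. The seven exponent triples listed are exactly the $(d_0, d_1, d_2)$ with $d_0 + d_1 + d_2 = 3$ and $d_0 + d_1 \geq 2 d_2$, i.e., all cubic monomials other than $x_2^3$, $x_0 x_2^2$, and $x_1 x_2^2$. A direct computation of partial derivatives at $p = (0:0:1)$ shows that the coefficients of these three excluded monomials are, up to nonzero scalars, precisely $f(p)$, $\tfrac{\partial f}{\partial x_0}(p)$ and $\tfrac{\partial f}{\partial x_1}(p)$ (with $\tfrac{\partial f}{\partial x_2}(p)$ then forced to vanish). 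Hence $f$ is supported on this state iff $V(f)$ is singular at $p$. For the family associated with $\lambda_2 = \mathrm{Diag}(t^2, t^{-1}, t^{-1})$, the state condition $2 d_0 \geq d_1 + d_2$ combined with $d_0 + d_1 + d_2 = 3$ collapses to $d_0 \geq 1$; hence such an $f$ is divisible by $x_0$, so $V(f)$ has the line $\{x_0 = 0\}$ as a component and is therefore reducible, hence singular.

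Combining the two cases, every nonstable cubic has a singular vanishing locus. Conversely, if $V(f)$ is singular at some point $q$, the transitivity of $\mathrm{SL}_3$ on $\mathbb{P}^2$ provides $g \in G$ with $g \cdot p = q$, so $g^{-1} \cdot f$ is singular at $p$, lies in the first family, and is therefore nonstable; by $G$-invariance of stability, so is $f$. Taking complements gives the lemma. The only subtle point is the completeness assertion underlying the \texttt{CompGIT} output, i.e.\ that these two states really do exhaust the $G$-orbits of nonstable cubics; this is guaranteed by \cite{GMGMS}, and once it is accepted the rest reduces to an elementary translation between monomials and derivatives.
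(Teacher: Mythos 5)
Your proposal is correct and follows essentially the same route as the paper: interpret the $\lambda_1$-family as the cubics singular at $p=(0:0:1)$ (via the three excluded monomials $x_2^3$, $x_0x_2^2$, $x_1x_2^2$ being, up to scalars, $f(p)$ and the first partials), interpret the $\lambda_2$-family as the cubics divisible by $x_0$ and hence reducible and singular, invoke the completeness of the \texttt{CompGIT}/\cite{GMGMS} list of maximal nonstable states up to $G$-conjugacy, and use transitivity of $\mathrm{SL}_3$ on $\mathbb P^2$ for the converse. Your write-up is, if anything, slightly more explicit than the paper's about the derivative computation and the Euler-relation step.
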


In the case of strictly polystable loci, we have that $f$ is polystable if and only if, up to conjugation by $G$, 
$$f=x_0x_1x_2.$$
Thus:
\begin{lemma}
    A cubic form $f$ in $3$ variables is strictly GIT-polystable if and only if $V(f)$ is the union of $3$ lines which do not intersect at a point.
\end{lemma}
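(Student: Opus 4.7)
The plan is to use the output of CompGIT displayed in \S\ref{sec:comp-example} as a classification of strictly polystable cubics up to the $G$-action, and then to interpret each of the two polystable states geometrically as a triangle of lines.

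First I would handle the forward direction. By the CompGIT output and \cite{GMGMS}, any strictly polystable $f$ is $G$-conjugate to a form supported on the monomials of one of the two listed states. State (1), with the single character $(1,1,1)$, yields $f = c \cdot x_0 x_1 x_2$, whose vanishing locus is the union of the three coordinate lines $V(x_0), V(x_1), V(x_2)$, meeting pairwise at three distinct coordinate points with no common triple intersection. State (2), with characters $\{(2,0,1),(1,1,1),(0,2,1)\}$, yields $f = x_2 \cdot Q(x_0,x_1)$ for some nonzero binary quadratic $Q$. I would then argue that if $Q$ has a repeated linear factor, say $Q = \ell^2$, a linear change of coordinates within $\langle x_0, x_1 \rangle$ reduces $f$ to a scalar multiple of $x_2 x_1^2$, whose single monomial $(0,2,1)$ pairs strictly positively with the 1-PS $(-1,1,0)$; by the Hilbert--Mumford criterion such an $f$ is unstable, and so this degenerate sub-family contributes nothing to the polystable locus. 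Hence $Q$ factors as $\ell_1 \ell_2$ with $\ell_1, \ell_2$ distinct linear forms in $\langle x_0, x_1 \rangle$, and the three lines $V(x_2), V(\ell_1), V(\ell_2)$ are pairwise distinct; moreover $V(\ell_1) \cap V(\ell_2) = (0:0:1) \notin V(x_2)$, so they have no common point. In both states $V(f)$ is a triangle.

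For the converse, suppose $V(f)$ is a union of three distinct lines $L_1, L_2, L_3$ with empty total intersection. The three pairwise intersection points are then distinct and in general position in $\mathbb{P}^2$, so $G = \mathrm{SL}_3$, acting through $\mathrm{PGL}_3$, admits an element sending these three points to $(1:0:0), (0:1:0), (0:0:1)$. This element carries each $L_i$ to a coordinate line and hence $f$ to a nonzero scalar multiple of $x_0 x_1 x_2$, which lies in the first strictly polystable family and is therefore itself strictly polystable.

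The main obstacle is the careful exclusion of the degenerate case $Q = \ell^2$ from state (2): one has to justify that a line-plus-double-line configuration falls into the unstable locus rather than merely failing to be polystable. This is an elementary Hilbert--Mumford check, but it does require choosing an adapted basis before producing the destabilising 1-PS, so one must be careful that the geometric dichotomy ``$Q$ has distinct roots versus double root'' lines up precisely with the stability dichotomy coming from CompGIT's numerical output.
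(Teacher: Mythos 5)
Your forward direction is sound, and is in fact more detailed than what the paper records: you correctly use that a strictly $G$-polystable point must be strictly $T$-polystable up to conjugation, so the two listed states exhaust the candidates, and you correctly dispose of the degenerate member $x_2\ell^2$ of the second state with the one-parameter subgroup $(-1,1,0)$ (in the paper's conventions the single monomial $(0,2,1)$ pairs strictly positively with it, so the double-line configuration is unstable and cannot be polystable). The genuine gap is the last clause of your converse: ``which lies in the first strictly polystable family and is therefore itself strictly polystable.'' This inference is precisely what the paper's proof is written to avoid. The CompGIT output lists strictly \emph{$T$-polystable} states for a fixed maximal torus $T$; $T$-polystability is a necessary condition for $G$-polystability, not a sufficient one, so membership of $x_0x_1x_2$ in a listed state does not certify that its $\mathrm{SL}_3$-orbit is closed in the semistable locus. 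As written, your argument establishes only that the strictly polystable locus is contained in the orbit of the triangle, not that the triangle is actually polystable --- so the ``if'' direction is unproved.

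The paper closes this gap with a soft existence argument: the GIT quotient is projective, while the stable locus is open and has non-compact image (the moduli of smooth plane cubics), so at least one strictly polystable orbit must exist; since the only candidate surviving the forward-direction analysis is the orbit of $x_0x_1x_2$, that orbit must be it. You could instead verify closedness of the orbit directly (for instance, its stabiliser contains the full maximal torus, and one checks that every one-parameter subgroup $\lambda$ with $\mu(x_0x_1x_2,\lambda)=0$ has limit inside the orbit), but some step of this kind is required to convert the list of $T$-polystable candidates into an actual polystability statement.
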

\begin{proof}
    Note that our output only gives us that $f$ is strictly GIT-$T$-polystable where $T$ is a maximal torus. However any strictly GIT-polystable element must be GIT-$T$-polystable and since we only have one such element and GIT-stable elements form a non-compact subset while the GIT quotient is compact, we must have at least one strictly GIT-polystable element and thus $f=x_0x_1x_2$ must be GIT-polystable.
\end{proof}

Using the programme's output, one can further characterise GIT unstable elements as those which have a singularity that is worse than an ordinary double point (i.e. an $A_2$-singularity or worse, in Arnold's language). By taking complements, one deduces that semistable elements are those which are smooth or have normal crossings. Since the only polystable element is the union of $3$ lines not intersecting at a point, any strictly semistable curve is either a nodal cubic or the union of a conic and a line intersecting normally. To fill in the details, one must make use of the theory of $A_n$ and $D_n$ singularities and their degenerations (cf. \cite[Table 3 and Fig. 2]{Gallardo-JMG-cubics}), as well as the fact that local degenerations of singularities of cubic curves are unobstructed \cite[Theorem 1]{Shustin-Tyomkin}.

\subsection{Applications} 
\label{sec:applications}
In \cite[Table 1]{GMGMS}, a number of applications to modelling moduli spaces are discussed, with references to the literature. We summarise and expand them here. We do not see this section as a description of full-worked out applications, but as a list of problems one can solve using the output of \verb|CompGIT|.
 
\subsubsection{Type $A_n$}
For $G$ of type $A_n$ (i.e. if $G=\mathrm{SL}_{n+1}$), the moduli of hypersurfaces in $\mathbb P^n$ can be modeled as $\mathbb P(V)\git \mathrm{SL}_{n+1}$ where $V=H^0(\mathbb P^n, \mathcal O_{\mathbb P^n}(d))$. The highest weight for this representation is $d\omega_1$, where $\omega_i$ denote the fundamental weights for the root system of $G$. For plane curves of degree $7$ and higher, the output has never been studied. For quintic surfaces and threefolds there are only partial results \cite{Gallardo-quintics, Gallardo-quintic-thesis, quintic-threefolds} with no results for degree $6$ or higher. Even the GIT quotient of quartic threefolds is largely unexplored and in higher dimensions we only have results for cubics.

One may consider linear systems $\mathcal L^k_d$ generated by $k$ hypersurfaces of degree $d$ in $\mathbb P^n$. Extending the action to $V$ as above , we get an action on $\Lambda_{i=1}^k V$ with highest weight $d\omega_1\wedge\cdots\wedge d\omega_1$. When $n=3, d=2, k=2$, an example is worked out in \cite{GMGMS}, cf. \cite{papazachariou-thesis, Papazachariou}.

\subsubsection{Type $B_n$}
Let $G$ be of type $B_n$. Then $G=\mathrm{SO}_{2n+1}$. Consider a smooth quadric $Q\subset \mathbb P^{2n}$. Since $Q$ is a Fano hypersurface, any automorphism of $Q$ must fix $-K_Q=\mathcal O_{\mathbb P^{2n}}(2n+1-2)|_Q\cong \mathcal O_Q(2n-1)=(2n-1)\mathcal O_Q(1)$, i.e. any automorphism of $Q$ is induced by an automorphism of $\mathbb P^{2n}$ that fixes $Q$. By diagonalising $Q$, we have that the automorphism group $\mathrm{Aut}(Q)\cong \mathrm{PO}_{2n+1}( Q)$, i.e. the projective orthogonal group fixing $Q$. But since we diagonalised $Q$,
$$\mathrm{PO}_{2n+1}(Q)=\mathrm{PO}_{2n+1}\cong \mathrm{SO}_{2n+1}=G.$$
Now let $d\geqslant 2$ consider the moduli of hypersurfaces $X\in |\mathcal O_Q(d)|=\mathbb P(V)$, where $V$ is defined by the exact sequence
$$0\longrightarrow H^0(\mathbb P^{2n}, \mathcal O_{\mathbb P^{2n}}(d-2))\longrightarrow H^0(\mathbb P^{2n}, \mathcal O_{\mathbb P^{2n}}(d)\longrightarrow V \longrightarrow 0.$$
Then the GIT quotient $\mathbb P(V)\git G$ is a model of the moduli of complete intersections of type $(2, d)$. In the case when $n=2$ and $d\geqslant 3$ (i.e. when $X\subset Q$ are surfaces of general type) the GIT stack compactifies such surfaces with at worst semi-log canonical singularities (thus providing a model for the KSBA compactification) and for $d\geqslant 4$, these surfaces are actually all in the GIT stable locus, as proven by Byun-Lee \cite{Byun-Lee-general-type}. Thus, the GIT quotient provides a model for the KSBA compactification.  Interestingly, the description of the GIT quotient in terms of the singularities of the surfaces is not provided even for $d=3$ and our program could provide the initial step to describe such quotient (i.e. not just `semi-log canonical' but the explicit singularity types that can be realised on this quotient), addressing a concern of  \cite{Byun-Lee-general-type}: ``If $S$ is a complete intersection defined by hypersurfaces with arbitrary degree in P4, then GIT stability analysis is hard to describe''. As it can be seen from Table \ref{tab:b2}, the number of families to consider is relatively small for the first few degrees, so this is a  doable problem. The notation in Table \ref{tab:b2} (and subsequent Tables \ref{tab:b3}, \ref{tab:d2}, \ref{tab:d3}) follows that in \cite{GMGMS}; namely the `Rep.' column denotes the irreducible representation $V=\Gamma_w$ determined by its highest weight $w$ and the following elements denote the size of certain sets of interest. $|\Xi_V|$ is the number of weights of the representation $V$ and serves as a measure of the complexity of the input. The numbers $|P^F_{s}|$, $|P^F_{ss}|$, $|P^F_{ps}|$ is the number of non-stable, unstable and strictly polystable families to analyse.
\small
\begin{table}[!ht]
\label{tab:b2} 
\begin{center}
\begin{tabular}{l|ll|c|rrr}

 & Type & Rep. & $|\Xi_V|$ &  $|P_s^F|$ & $|P_{ss}^F|$ & $|P_{ps}^F|$\\ \hline
$d=3$ & B2 & $\Gamma_{3\omega_1}$ & 25 &  3 &  2 &  4\\
$d=4$ & B2 & $\Gamma_{4\omega_1}$ & 41 &  4 & 3 & 5 \\
$d=5$ & B2 & $\Gamma_{5\omega_1}$ & 61 &  6 & 5 & 7 \\
$d=6$ & B2 & $\Gamma_{6\omega_1}$ & 85 &  7 & 6 & 8 \\
$d=7$ & B2 & $\Gamma_{7\omega_1}$ & 113&  10 & 9 & 11 \\
$d=8$ & B2 & $\Gamma_{8\omega_1}$ & 145 &  12 & 11 & 13 

\end{tabular}
\end{center}
\caption{Complexity of solutions for problem of type $B_2$.}
\end{table}
\normalsize
In fact, note that the GIT quotient with respect to $\mathrm{SL}_5$-action is well-understood \cite{allcock}, so comparing it to the $\mathrm{SO}_5$ quotient may prove both useful and interesting.

If we consider $n=3$ (representing fourfolds obtained as a complete intersection of a smooth quadric and a hypersurface of degree $d$ in $\mathbb P^6$), the complexity grows significantly (see Table \ref{tab:b3}) and it may not be possible to say much about it except for $d=3$. In particular, the programme is not able to obtain the strictly polystable data in a reasonable time. 
\begin{table}[!ht]
\small
\begin{center}
\begin{tabular}{l|ll|r|rrr}
 & Type & Rep. & $|\Xi_V|$ &  $|P_s^F|$ & $|P_{ss}^F|$ \\ \hline

$d=3$ & B3 & $\Gamma_{3\omega_1}$ & 14 & 8 & 7 \\
$d=4$ & B3 & $\Gamma_{4\omega_1}$ & 26 &  20 & 26 \\
$d=5$ & B3 & $\Gamma_{5\omega_1}$ & 48 &  68 & 102 \\
$d=6$ & B3 & $\Gamma_{6\omega_1}$ & 70 &  141 & 227 \\
\end{tabular}
\end{center}
\caption{Complexity of solutions for problem of type $B_3$.}
\label{tab:b3}
\end{table}
\normalsize



\subsubsection{Type $C_n$}
Consider general complex curve $C$ of genus $9$ such that $C$ has no $g^1_5$. By \cite[Theorem B]{mukai-annals}, we can embed $C$ into the symplectic Grassmannian $X=\mathrm{SpG}(3,6)\subset\mathbb P^{13}$. We have that $C$  is a transversal intersection of $X$ with a linear $P\cong \mathbb P^8\subset \mathbb P^{13}$ and any two such $C$ and $P$ are unique up to the action of $G=\mathrm{Sp}(6)$, the subgroup of $\mathrm{PGL}_6$ fixing the one-dimensional space generated by the symplectic form. Here $G$ is  a group of type $C_3$. Thus, we can see any such curve $C$ as a Zariski-dense open set of $\overline M\coloneqq\mathrm{Gr}(9,V)\git \mathrm{Sp}_6$ where $V\cong \mathbb C^{14}$ is the irreducible representation of $\mathrm{Sp}_6$ with highest weight $\omega_3$. In particular $\overline M_9$ is birationally equivalent to $\overline M$.

While the dimension of the GIT quotient is not high, the boundary is expected to be surprisingly complicated. Indeed, in \cite{GMGMS}, we showed that the unstable locus and the non-stable locus where expected to have $186$ and $142$ components. It is possible many of these are isomorphic to each other, but in any case it is a considerable enterprise to describe them explicitly. While our software is primarily created to help describe GIT quotients, this example demonstrates it is also helpful to determine a priory the viability of providing an explicit description of a moduli problem via GIT.

\subsubsection{Type $D_n$}
We recall two applications for $D_n$. For the first one, let $\mathrm{OG}(5, 10)\subset \mathbb P^{15}$ be the naturally-embedded $10$-dimensional orthogonal Grassmannian. If $C$ is a smooth genus-$7$ curve with no $g^1_4$, it has an ideal generated by $10$ quadrics in $\mathbb P^6$. Such a curve can be embedded in $\mathrm{OG}(5, 10)$ by sending each point $p\in C$ to the value of the Jacobian matrix of the quadrics, as proven by Mukai in \cite{Mukai-genus7}, where equations for the quadrics are provided, cf. \cite{Swinarski} for a more contemporaneous treatment. In fact, Mukai shows that the image of $C$ in $\mathrm{OG}(5, 10)\subset \mathbb P^{15}$  is the transversal intersection in $\mathbb P^15$ of $\mathrm{OG}(5, 10)$ with a $6$-dimensional subspace in  $\mathbb P^{15}$ and that any such transversal intersection provides a curve of genus $7$ with no $g^1_4$, with that one-to-one correspondence being unique up to action of $\mathrm{SO}_{10}$, which is a group of type $D_5$.

Since $\mathrm{Spin}_{10}$ is the universal cover of $\mathrm{SO}_{10}$, and curves with no $g^1_4$ are general in the moduli of genus $7$ curves, the GIT quotient
$$\mathrm{Gr}(7,16)\git \mathrm{Spin}_{10}\subset \mathbb P(S^+)\git \mathrm{Spin}_{10}$$
is a birational model of $\overline{M}_7$, where $S^+$ is the $16$-dimensional half-spin representation of $\mathrm{Spin}_{10}$ with highest weight $\omega_4$. As argued in \cite{GMGMS}, modern desktop computers do not seem to be able to run \verb|CompGIT| for this problem, as the output is expected to be of the order of $10^6$ families to analyse. We must emphasise that even if a supercomputer could run \verb|CompGIT| for this problem in a reasonable time (if, say, parallelisation and a smart use of memory was carried out), the output would be too complex to analyse by hand. Nonetheless, in  \cite{Mukai-genus7}, Mukai uses this construction to classify tetragonal curves of genus $7$ and in \cite{Swinarski}, uses the above analysis as a first step to further use invariant theory to establish the GIT semistability of $7$-cuspidal curves, the balanced ribbon and a family of highly reducible nodal curves.

The second application is the odd-dimensional version of what we covered in \S4.3.2. Let $n\geqslant 2$ and $G=\mathrm{SO}_{2n-1}$. Then $G$ is of type $D_n$ and, by the same reasoning, we have a natural action of $G$ on $\mathbb P(H^0(\mathbb P^{2n-1}, \mathcal{O}_{\mathbb P^{2n-1}}(d)))$, where the latter parametrises complete intersections (of dimension $2n-3$) of a smooth quadric and a hypersurface of degree $d$ in $\mathbb P^{2n-1}$ with $\mathbb P(V)\git G$ giving a model of their moduli space. In Table \ref{tab:d2} we give the complexity for curves. Since $-K_Q=\mathcal O_Q(2)$, for each $d$, this natural moduli problem parameterises the moduli of curves in $Q\cong \mathbb P^1\times\mathbb P^1$ of bidegree $(d,d)$.

\small
\begin{table}[!hb]
\small
\begin{center}
\begin{tabular}{l|ll|r|rrr}
 & Type & Rep. & $|\Xi_V|$ & $|P_s^F|$ & $|P_{ss}^F|$ & $|P_{ps}^F|$\\ \hline
$d=2$ & D2 & $\Gamma_{2\omega_1}$ & 3 & 3 & 2 & 4 \\
$d=3$ & D2 & $\Gamma_{3\omega_1}$ & 3 & 3 & 4 & 3 \\
$d=4$ & D2 & $\Gamma_{4\omega_1}$ & 5 & 5 & 4 & 6 \\
$d=5$ & D2 & $\Gamma_{5\omega_1}$ & 7 & 7 & 8 & 7 \\
$d=6$ & D2 & $\Gamma_{6\omega_1}$ & 9 & 9 & 8 & 10 \\
\end{tabular}
\end{center}
\caption{Complexity of solutions for problem of type $D_2$.}
\label{tab:d2}
\end{table}
\normalsize

The next dimension still has some interest while being accessible to classification. The GIT quotient $\mathbb P(V)\git G$ parameterises threefolds obtained as complete intersections of a smooth quadric $Q$  and a hypersurface of degree $d$ in $\mathbb P^5$.
\small
\begin{table}[!hb]
\small
\begin{center}
\begin{tabular}{l|ll|r|rrr}
 & Type & Rep. & $|\Xi_V|$ & $|P_s^F|$ & $|P_{ss}^F|$ & $|P_{ps}^F|$\\ \hline
$d=2$ & D3 & $\Gamma_{2\omega_1}$ & 7 & 4 & 2 & 6 \\
$d=3$ & D3 & $\Gamma_{3\omega_1}$ & 11 & 11 & 14 & 10 \\
$d=4$ & D3 & $\Gamma_{4\omega_1}$ & 22 & 18 & 24 & 16\\
\end{tabular}
\end{center}
\caption{Complexity of solutions for problem of type $D_3$.}
\label{tab:d3}
\end{table}
\normalsize

\subsubsection{Exceptional types}
One of the most important additions to \verb|CompGIT| since the prototype code of \cite{GMGMS} is the inclusion of exceptional Lie groups, i.e those of type $G_2$, $F_4$, $E_6$, $E_7$ and $E_8$. Explicit examples of their GIT problems are harder to find, beyond canonical examples such as their fundamental representations. Related examples come from gauge theory, in which one can take exceptional structure group $G$ and study GIT problems modulo the corresponding gauge group \cite{ramanathan_moduli_I, ramanathan_moduli_II}.  


\section{Possible future developments}
\label{sec:future-developments}
In \cite[\S 7]{GMGMS}, we detailed a number of potential improvements to the algorithms and the code introduced there, which we have completed here. It is difficult to elaborate on those potential improvements without going into the details of the implementation. Moreover, there is little we can add to what was written there and we refer the reader to \cite{GMGMS} for some ideas on generalisations to variations of GIT quotients (cf. \cite{Gallardo-JMG-seminal, GMG17} for previous work by the second author). Nonetheless, we can offer some additional changes not considered in \cite{GMGMS} that could be considered and that were not considered in \cite{GMGMS}, as well as elaborating slightly on \cite[\S 7.3 Parallel computing]{GMGMS}.

\subsection{Parallel computing}
The main computational hurdle of our algorithms is finding solutions to one-parameter subgroups. Solutions do not depend on previous computations and this is an aspect that could be easily parallelised using the \verb|sage.parallel| library. The reason we have not carried out this implementation is because for current applications, the time saved is neglectable when compared to the time to process the output in geometric terms. There is one exception where it may be worth considering parallelisation. The computation of the strictly polystable loci requires determining if a given point is in the interior or the boundary of a convex hull of points. The current implementation uses the class \verb|Polyhedron| --- \verb|sage.geometry.polyhedron.constructor| --- as a black box to obtain this information. When the number of points to consider is large, the current implementation takes too long, even for a relatively small number of families (e.g. for only $8$ families in the case of $B_4$ acting on $\Gamma_{\omega_3}$, the computation was stopped after 48 hours on a desktop computer, see \cite[Table 1]{GMGMS}). 

\subsection{Generalisation to semisimple groups and other groups}
Our implementation works for irreducible $G$-representations where $G$ is a simple group, while our algorithms (once some optimisations detailed in \cite{GMGMS} are removed) works for any reductive algebraic group. The reason for this is that the implementation relies on two pre-existing classes in \verb|Sagemath|, namely \verb|WeylCharacterRing| and \verb|WeylGroup|.

\verb|WeylCharacterRing| should work for general semisimple groups without changing the code. This class allows us to take representations as a black box, but the only bit of this class that we use is the method \verb|weight_multiplicities| in \verb|Git.py| within the constructor of the class \verb|GITProblem| to obtain a list of non-zero weights of the representation. Our implementation could easily be modified to request from the user a list of the weights of a representation, thus allowing us to work with reducible representations and more general reductive groups.

\verb|WeylGroup| is used within \verb|SimpleGroup.py| to obtain a list of all the elements of the Weyl group of the simple group $G$ to potentially simplify the output and reduce the number of computations (this is what we call \emph{Weyl optimisation} in \cite{GMGMS}). To generalise to more general reductive groups, some additional work must be carried out here. There are two options. The first option is to describe the Weyl group for more general $G$. We do not recommend this, as it will be highly dependent on the choice of group, making it difficult to provide a uniform approach. A simpler option is to stop carrying out any Weyl optimisation at all. The solution methods (\verb|solve_non_stable|, \verb|solve_unstable|) already include a parameter (\verb|Weyl_optimisation|), set by default to \verb|False|, not to carry out this optimisation. However, the constructors assume that the group $G$ is simple and the Weyl group is computed nonetheless.

It is worth noting that it is actually believed that the output is not simplified at all by the use of the Weyl group (see \cite[Conjecture 7.4]{GMGMS}), although the programme may be faster when Weyl optimisation is used.

\subsection{Non-reductive groups}
As exemplified by Nagata \cite{counter-example}, the GIT of Mumford (and more generally invariant theory) does not work well for non-reductive groups. The more recent theory of \emph{non-reductive GIT} due to Kirwan and her collaborators \cite{non-reductive-GIT} does allow for GIT quotients by non-reductive groups under an additional choice of \emph{graded linearisation}. To the best of our knowledge no systematic algorithmical study has been carried out, although some seminal work for hypersurfaces has been considered \cite{Dominic}. A non-reductive version of our setting (a non-reductive group $G$ acting on a vector space $V$) is a natural starting point.

\subsection{Generalisations to other schemes}
As discussed in \S \ref{sec:GIT quotients} a $G$-equivariant embedding of a $G$-scheme $X$ into $\mathbb P^N$ is necessary to apply the outputs of our code. Implementing solutions to this embedding problem into {\verb|CompGIT|} appears to be out of reach, as for general $X$, one does not expect a uniform computational approach to finding such an embedding. Nonetheless, it may be possible to give an ad-hoc treatment for some schemes of geometric interest. For instance, one can consider Grassmannians, or hypersurfaces, or products and complete intersections of either of the former (note that $\mathbb P^N$ is a special case of Grassmannian). Indeed, some seminal work for $G=\mathrm{SL}_{n+1}$ acting on $V=H^0(\mathbb P^n, \mathcal O(d))\times\mathbb P^n$ naturally was carried out in \cite{Gallardo-JMG-seminal, Karagiorgis-Ortscheidt-Papazachariou} (including code \cite{GMG17}) and quotients for $\mathrm{SL}_n$ actions on a Grassmannian and on a product of a Grassmannian and projective space were considered in \cite{papazachariou-thesis, Papazachariou}.

\bibliographystyle{alpha} 
\bibliography{main} 

\begin{thebibliography}{10}

\bibitem{8authors}
H.~Abban, I.~Cheltsov, E.~Denisova, E.~Etxabarri-Alberdi, A.~Kaloghiros,
  D.~Jiao, J.~Martinez-Garcia, and T.~Papazachariou.
\newblock One-dimensional components in the {K}-moduli of smooth fano 3-folds.
\newblock {\em J. Algebraic Geom.}, 34(3):489--534, 2025.

\bibitem{allcock}
D.~Allcock.
\newblock The moduli space of cubic threefolds.
\newblock {\em J. Algebraic Geom.}, 12(2):201--223, 2003.

\bibitem{AHLH}
J.~Alper, D.~Halpern-Leistner, and J.~Heinloth.
\newblock Existence of moduli spaces for algebraic stacks.
\newblock {\em Inventiones mathematicae}, 234(3):949--1038, 2023.

\bibitem{non-reductive-GIT}
G.~B\`erczi, B.~Doran, and F.~Kirwan.
\newblock Graded linearisations.
\newblock In {\em Modern geometry: a celebration of the work of {S}imon
  {D}onaldson}, volume~99 of {\em Proc. Sympos. Pure Math.}, pages 1--22. Amer.
  Math. Soc., Providence, RI, 2018.

\bibitem{bourbaki}
N.~Bourbaki.
\newblock Lie groups and lie algebras. chapters 4-6. translated from the 1968
  french original by a. pressley.
\newblock {\em Graduate Texts in Mathematics, Springer-Verlag}, 2002.

\bibitem{Dominic}
D.~Bunnett.
\newblock On the moduli of hypersurfaces in toric orbifolds.
\newblock {\em Proc. Edinb. Math. Soc. (2)}, 67(2):577--616, 2024.

\bibitem{Byun-Lee-general-type}
S.~Byun and Y.~Lee.
\newblock Stability of hypersurface sections of quadric threefolds.
\newblock {\em Sci. China Math.}, 58(3):479--486, 2015.

\bibitem{cartan}
E.~Cartan.
\newblock Groupes simples clos et ouverts et g{\'e}om{\'e}trie riemannienne.
\newblock {\em Journal de math{\'e}matiques pures et appliqu{\'e}es}, 8:1--33,
  1929.

\bibitem{chevalley}
C.~Chevalley.
\newblock Sur la classification des alg{\`e}bres de lie simples et de leurs
  repr{\'e}sentations.
\newblock {\em CR Acad. Sci. Paris}, 227:1136--1138, 1948.

\bibitem{Collingwood-McGovern}
D.~H. Collingwood and W.~M. McGovern.
\newblock {\em Nilpotent orbits in semisimple {L}ie algebras}.
\newblock Van Nostrand Reinhold Mathematics Series. Van Nostrand Reinhold Co.,
  New York, 1993.

\bibitem{Deligne-Mumford}
P.~Deligne and D.~Mumford.
\newblock The irreducibility of the space of curves of given genus.
\newblock {\em Inst. Hautes \'Etudes Sci. Publ. Math.}, (36):75--109, 1969.

\bibitem{DerksenKemper}
H.~Derksen and G.~Kemper.
\newblock {\em Computational invariant theory}, volume 130 of {\em
  Encyclopaedia of Mathematical Sciences}.
\newblock Springer, Heidelberg, enlarged edition, 2015.
\newblock With two appendices by Vladimir L. Popov, and an addendum by Norbert
  A'Campo and Popov, Invariant Theory and Algebraic Transformation Groups,
  VIII.

\bibitem{fulton-harris}
W.~Fulton and J.~Harris.
\newblock {\em Representation theory: a first course}, volume 129.
\newblock Springer Science \& Business Media, 2013.

\bibitem{Gallardo-quintic-thesis}
P.~Gallardo.
\newblock {\em On the moduli space of quintic surfaces}.
\newblock ProQuest LLC, Ann Arbor, MI, 2014.
\newblock Thesis (Ph.D.)--State University of New York at Stony Brook.

\bibitem{Gallardo-quintics}
P.~Gallardo.
\newblock On the {GIT} quotient space of quintic surfaces.
\newblock {\em Trans. Amer. Math. Soc.}, 371(6):4251--4276, 2019.

\bibitem{GMG17}
P.~Gallardo and J.~Martinez-Garcia.
\newblock Variations of git quotients package v0.6.13.
\newblock {https://doi.org/10.15125/BATH-00458}, 2017.

\bibitem{Gallardo-JMG-seminal}
P.~Gallardo and J.~Martinez-Garcia.
\newblock Variations of geometric invariant quotients for pairs, a
  computational approach.
\newblock {\em Proc. Amer. Math. Soc.}, 146(6):2395--2408, 2018.

\bibitem{Gallardo-JMG-cubics}
P.~Gallardo and J.~Martinez-Garcia.
\newblock Moduli of cubic surfaces and their anticanonical divisors.
\newblock {\em Rev. Mat. Complut.}, 32(3):853--873, 2019.

\bibitem{GMGMS}
P.~Gallardo, J.~Martinez-Garcia, H-B Moon, and D.~Swinarski.
\newblock Computation of git quotients of semisimple groups.
\newblock {\em arXiv:2308.08049}, 2023.

\bibitem{Gallardo-JMG-Spotti}
P.~Gallardo, J.~Martinez-Garcia, and C.~Spotti.
\newblock Applications of the moduli continuity method to log {K}-stable pairs.
\newblock {\em J. Lond. Math. Soc. (2)}, 103(2):729--759, 2021.

\bibitem{Gallardo-JMG-Zhang}
P.~Gallardo, J.~Martinez-Garcia, and Z.~Zhang.
\newblock Compactifications of the moduli space of plane quartics and two
  lines.
\newblock {\em Eur. J. Math.}, 4(3):1000--1034, 2018.

\bibitem{CompGIT}
R.~Hanson and J.~Martinez-Garcia.
\newblock The compgit github homepage.
\newblock {https://github.com/Robbie-H/CompGIT}.

\bibitem{HilbertProblemList}
D.~Hilbert.
\newblock Mathematical problems.
\newblock {\em Bull. Amer. Math. Soc.}, 8(10):437--479, 1902.

\bibitem{Karagiorgis-Ortscheidt-Papazachariou}
Ioannis Karagiorgis, Theresa Ortscheidt, and Theodoros~Stylianos Papazachariou.
\newblock Git stability of divisors in products of projective spaces.
\newblock {\em J. Korean Math. Soc.}, 2025.

\bibitem{killing}
W.~Killing.
\newblock Die zusammensetzung der stetigen endlichen transformations-gruppen.
\newblock {\em Mathematische Annalen}, 34(1):57--122, 1889.

\bibitem{Kirwan}
F.~Kirwan.
\newblock Sur la cohomologie des espaces quotients.
\newblock {\em C. R. Acad. Sci. Paris S\'er. I Math.}, 295(3):261--264, 1982.

\bibitem{quintic-threefolds}
C.~Lakhani.
\newblock The git compactification of quintic threefolds, 2010.

\bibitem{Laza}
R.~Laza.
\newblock Deformations of singularities and variation of {GIT} quotients.
\newblock {\em Trans. Amer. Math. Soc.}, 361(4):2109--2161, 2009.

\bibitem{Liu-Xu}
Y.~Liu and C.~Xu.
\newblock K-stability of cubic threefolds.
\newblock {\em Duke Math. J.}, 168(11):2029--2073, 2019.

\bibitem{JMG-Papazachariou-Zhao}
J.~Martinez-Garcia, T.~S. Papazachariou, and J.~Zhao.
\newblock K-moduli of log del pezzo pairs and variations of git, 2024.

\bibitem{Mukai-genus7}
S.~Mukai.
\newblock Curves and symmetric spaces. {I}.
\newblock {\em Amer. J. Math.}, 117(6):1627--1644, 1995.

\bibitem{mukai-annals}
S.~Mukai.
\newblock Curves and symmetric spaces, {II}.
\newblock {\em Ann. of Math. (2)}, 172(3):1539--1558, 2010.

\bibitem{Mumford}
D.~Mumford, J.~Fogarty, and F.~Kirwan.
\newblock {\em Geometric invariant theory}, volume~34 of {\em Ergebnisse der
  Mathematik und ihrer Grenzgebiete (2) [Results in Mathematics and Related
  Areas (2)]}.
\newblock Springer-Verlag, Berlin, third edition, 1994.

\bibitem{counter-example}
M.~Nagata.
\newblock On the fourteenth problem of {H}ilbert.
\newblock In {\em Proc. {I}nternat. {C}ongress {M}ath. 1958}, pages 459--462.
  Cambridge Univ. Press, New York, 1960.

\bibitem{papazachariou-thesis}
T.~S. Papazachariou.
\newblock {\em K-moduli of log Fano\_complete\_intersections}.
\newblock 2023.
\newblock Thesis (Ph.D.)--The University of Essex (United Kingdom).

\bibitem{Papazachariou}
T.~S. Papazachariou.
\newblock K-moduli of log fano complete intersections, 2024.

\bibitem{ramanathan_moduli_I}
A.~Ramanathan.
\newblock Moduli for principal bundles over algebraic curves: I.
\newblock In {\em Proc. Indian Acad. Sci}, volume 106, pages 301--328.
  Springer, 1996.

\bibitem{ramanathan_moduli_II}
A.~Ramanathan.
\newblock Moduli for principal bundles over algebraic curves: Ii.
\newblock In {\em Proc. Indian Acad. Sci}, volume 106, pages 421--449.
  Springer, 1996.

\bibitem{SageMath_install}
Sage.
\newblock The sagemath installation guide.
\newblock {https://doc.sagemath.org/html/en/installation/index.html}.

\bibitem{sexticsguy}
J.~Shah.
\newblock A complete moduli space for {$K3$}\ surfaces of degree {$2$}.
\newblock {\em Ann. of Math. (2)}, 112(3):485--510, 1980.

\bibitem{Shustin-Tyomkin}
E.~Shustin and I.~Tyomkin.
\newblock Versal deformation of algebraic hypersurfaces with isolated
  singularities.
\newblock {\em Math. Ann.}, 313(2):297--314, 1999.

\bibitem{Swinarski}
D.~Swinarski.
\newblock Some singular curves in mukai's model of $\overline{M}_7$, 2023.

\bibitem{sagemath}
{The Sage Developers}.
\newblock {\em {S}ageMath, the {S}age {M}athematics {S}oftware {S}ystem
  ({V}ersion 10.6)}, 2025.

\bibitem{weyl_classical}
H.~Weyl.
\newblock {\em The classical groups: their invariants and representations},
  volume~1.
\newblock Princeton university press, 1946.

\end{thebibliography}


\end{document}